\documentclass[12pt]{article}
\usepackage[top=1in, bottom=1in, left=1in, right=1in]{geometry}
\usepackage{graphicx, amsfonts,amssymb,amsthm,amsbsy,latexsym,amscd,amsmath,dsfont,euscript,enumerate,manfnt, marvosym,verbatim, calc,mathrsfs,marvosym,textcomp, color,xcolor,tikz-cd,setspace,stmaryrd,upgreek,bm} 
\usepackage{mathtools}
\usepackage{pgf,tikz,amsmath}
\usepackage{blkarray}
\usepackage{caption}
\usepackage{enumerate}
\usepackage[pagebackref,colorlinks=true,
linkcolor=blue,
urlcolor=red,colorlinks,
citecolor=green]{hyperref}
\usepackage{cleveref}
\usepackage{parskip}
\usepackage{setspace}
\makeatletter
\def\thm@space@setup{%
  \thm@preskip=10pt 
  \thm@postskip=6pt 
}
\makeatother

\usepackage{refcount}

\newtheorem*{theorem*}{Theorem}
\newtheorem{theorem}{Theorem}

\newtheorem{lemma}{Lemma}[section]

\crefname{lemma}{Lemma}{Lemmas}
\crefname{corollary}{Corollary}{Corollaries}

\Crefname{lemma}{Lemma}{Lemmas}
\Crefname{corollary}{Corollary}{Corollaries}

\crefalias{corollary}{corollary}

\crefname{claim}{claim}{claims}
\Crefname{claim}{Claim}{Claims}

\crefname{subclaim}{subclaim}{subclaims}
\Crefname{subclaim}{Subclaim}{Subclaims}

\theoremstyle{definition}

\newtheorem*{acknowledgement}{Acknowledgement}

\usepackage[T1]{fontenc}
\usepackage{lmodern}
\title{An Eventown Result for Multisets}
\author{
    Anirban Banerjee\thanks{\text{anirban.banerjee@iiserkol.ac.in}}
    \qquad
    Abisek Dewan\thanks{\text{ad22rs069@iiserkol.ac.in}}  \\[0.8em]
    \small Department of Mathematics and Statistics, IISER Kolkata, India
}
\date{}

\begin{document}

\maketitle
\begin{abstract}
We study an extension of the Eventown problem of sets to multisets on
$[n] = \{1,2,...,n\}$. A multiset is called
$m$-bounded if each element has multiplicity at most $m$. The intersection of two multisets is defined as the multiset in which each element has multiplicity equal to the minimum of its multiplicities in the two original multisets. A family of $m$-bounded
multisets is called an Eventown family if every member and the intersection of every pair of distinct members has even cardinality, with cardinalities counted with multiplicities.
We determine the maximum size $E_m(n)$ of such a family for
every $m$ and $n$. 
We show that
$
E_{2q}(n)=(q+1)^n,
$ and $
E_{2q+1}(n)=(q+1)^{n-2k}\bigl((q+1)^2+1\bigr)^k,$ where $k=\lfloor n/2\rfloor$.
In particular, the case $m=1$ recovers the classical Eventown
theorem for sets. 
We also characterize all extremal families for $m\ge2$.
For even $m$, the unique extremal family consists of all
multisets in which every element has even multiplicity.
For odd $m\ge3$, every extremal family has a product structure arising from two explicit families on pairs of coordinates, together with an additional coordinate restricted to even multiplicities when $n$ is odd. 
Consequently,
there are exactly $k+1$ extremal types up to permutations
of the ground set.
\end{abstract}

\noindent {\bf Key words:} Intersection Theorem; Eventown Family; Multisets.

\noindent {\bf AMS Subject Classification:}  05D05.

\section{Introduction}

 A family $\mathcal A\subseteq 2^{[n]}$ is called an \emph{Eventown family} if every member has even cardinality and the number of intersections of every two distinct members is even, where $[n]=\{1,\ldots,n\}$. Equivalently, $|A\cap B|$ is even for all $A,B\in\mathcal A$. 
 To answer the question of what the maximum possible size of such a family can be, posed by Erdős, Berlekamp~\cite{berlekamp1969subsets} and Graver\cite{graver1975boolean} independently showed that
\[
|\mathcal A|\le 2^{\lfloor n/2\rfloor}.
\]
Equality is attained by partitioning $2\lfloor n/2\rfloor$ elements into pairs and taking all unions of these pairs. This is also known as classical Eventown problem. On the other hand, the maximum size of an Oddtown family, defined as a collection of odd-sized subsets of $[n]$
 such that every pair of distinct sets has an even-sized intersection, is 
$n$. Thus, changing the parity of the set sizes changes the maximum family size from exponential to linear. The proofs of these results demonstrated the effectiveness of linear algebraic methods in addressing problems in extremal set theory. For an account of these methods and their applications to families of sets with restricted intersection sizes, including modular conditions, see Babai and Frankl~\cite{babai1992linear}. The study of parity-restricted intersections extends beyond ordinary set systems, with analogues of the Oddtown and Eventown problems arising in more general settings,
 such as, vector subspaces over finite field \cite{dey2026q}, permutations\cite{banerjee2026even,lindzey2026eventown}, and lattice paths\cite{shankar2026oddtown}. This  naturally leads to the investigation of parity-restricted intersection problems in other combinatorial frameworks. Here, we study an Eventown problem for multisets, that is, collections in which elements may occur with higher multiplicity.

Multiset analogues of intersection theorems have been studied in several settings. Füredi, Gerbner, and Vizer~\cite{furedi2015discrete} and Meagher and Purdy~\cite{Meagher_Purdy_2011} investigated intersection restrictions for families of fixed-cardinality multisets. More recently, Liao, Lv, Cao, and Lu~\cite{liao2024erdHos} established an Erdős--Ko--Rado theorem for multisets with bounded multiplicities. These results concern nonempty intersections or intersections of a prescribed minimum size. In this paper, we consider a parity restriction instead: intersections must have even cardinality, while the total cardinalities of the members are allowed to vary.

Once the maximum size of families of a given type has been determined, a natural structural question is to characterize those families that attain the corresponding extremal bound.
In the classical Eventown problem, the standard construction using disjoint pairs is not unique, and there exist many non-isomorphic extremal families; see Section~1.1 of \cite{sudakov2018two}. 
Sudakov and Vieira~\cite{sudakov2018two} investigated this structural question for $r$-wise Eventown families of subsets of $[n]$, in which the intersection of every $r$ distinct members has even cardinality. For $r\ge3$ and $n$ sufficiently large depending on $r$, they characterized the extremal families and established a corresponding stability result. 
Now, in this paper, we have characterized the extremal Eventown families of multisets in which at least one element occurs with multiplicity greater than one. The Eventown family of multisets in which each element occurs with multiplicity at most one is just a classical Eventown family.

\subsection{The Multiset Eventown Problem}
Fix positive integers  $m$ and $n$. A \emph{multiset} on $[n]$ is a collection of elements of $[n]$ in which repetitions are allowed. The number of occurrences of an element is called its \emph{multiplicity}. A multiset is called \emph{$m$-bounded} if every element has multiplicity at most $m$. Every $m$-bounded multiset $x$ on $[n]$ corresponds uniquely to a multiplicity vector
\[
x \equiv (x_1,\ldots,x_n)\in\{0,1,\ldots,m\}^n,
\]
where $x_i$ is the multiplicity of the element $i$ in $x$. Throughout this paper, we identify $m$-bounded multisets with their multiplicity vectors.

For two multisets $x$ and $y$, their intersection $x \cap y$
is the multiset in which each element $i$ has multiplicity $\min\{x_i,y_i\}$. The cardinality of their intersection, counting multiplicities, is therefore
\[
I(x,y)=  |x \cap y| = \sum_{i=1}^{n}\min\{x_i,y_i\}.
\]
Thus, the intersection counts the copies shared by the two multisets, rather than merely the elements appearing in both supports. We call a family of multisets
\[
\mathcal F\subseteq\{0,1,\ldots,m\}^n
\]
an  \emph{Eventown family} if $I(x,y)$ is even for every $x,y\in\mathcal F$. 
In particular, each member has an even total multiplicity. Let $E_m(n)$ denote the maximum possible cardinality of such a family.

This extends the classical Eventown problem by taking $m=1$. However, allowing multiplicities introduces an additional difficulty. For ordinary sets, the parity of the intersection cardinality is determined by the scalar product of their characteristic vectors over $\mathbb F_2$. For multisets, in contrast, the parities of the individual multiplicities do not, in general, determine the parity of the intersection cardinality, since the latter depends on the minimum of the corresponding multiplicities. Consequently, one cannot simply reduce the multiplicity vectors modulo $2$ and apply the orthogonality arguments used in the  proof of the classical Eventown theorem.

A trivial upper bound for $E_m(n)$ is 
 $2^{\lfloor mn/2\rfloor}$. To obtain this bound, represent each $m$-bounded multiset as a subset of $[n]\times[m]$, where a multiplicity $x_i$ is represented by selecting the first $x_i$ elements of the block $\{i\}\times[m]$. Under this representation, the cardinality of the intersection of two multisets equals the cardinality of the intersection of their corresponding subsets. The classical Eventown theorem therefore yields the stated upper bound. However, this bound is not sharp in general.
 
In this paper, we determine $E_m(n)$ for every $m$ and $n$, and characterize the extremal families for $m\ge2$.  We prove that the family consisting of all multiplicity vectors with even coordinates is the unique extremal family for every even $m$. Whereas for odd $m\ge3$ and $n\ge2$, there are more than one extremal family. We  characterize all the extremal families for every odd $m\ge3$. 

\subsection{Main Results}
Our main result determines the exact value of $E_m(n)$ for all positive integers $m$ and $n$.
\begin{theorem}\label{Theorem1}
    Let $E_m(n)$ be the maximum size of an Eventown family of $m$-bounded multisets on $[n]$. Then
    \[E_m(n)=
\begin{cases}
(q+1)^n,
& m=2q,\\[2mm]
\bigl((q+1)^2+1\bigr)^{n/2},
& m=2q+1,\quad n\text{ even},\\[2mm]
(q+1)\bigl((q+1)^2+1\bigr)^{(n-1)/2},
& m=2q+1,\quad n\text{ odd}.
\end{cases}\]
\end{theorem}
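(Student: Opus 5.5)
The plan is to prove the lower bound by explicit constructions and the upper bound by linear algebra over $\mathbb F_2$, using the staircase encoding from the introduction, followed by an induction on $n$ that splits off one or two coordinates at a time.

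\textbf{Constructions.} For $m=2q$ take all vectors with every coordinate even. Since the minimum of two even numbers is even, this is an Eventown family of size $(q+1)^n$. For $m=2q+1$, first build a family on a pair of coordinates: take all pairs $(a,b)$ with $a,b$ even, together with the single extra pair $(m,m)$. Its intersection with any even pair $(a,b)$ is $a+b$, which is even, and its self-intersection is $2m$, also even. This gives $(q+1)^2+1$ pairs. Then take the product of such pair-families over $\lfloor n/2\rfloor$ disjoint pairs of coordinates, and restrict a leftover coordinate (when $n$ is odd) to even values. Intersection parity is additive over coordinates, so the product is Eventown and attains the stated values.

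\textbf{Upper bound: reduction to an isotropic subspace.} Encode each multiplicity $a\in\{0,\dots,m\}$ by $v(a)=\mathbf 1_{\{1,\dots,a\}}\in\mathbb F_2^m$, and each $x$ by $\phi(x)=v(x_1)\oplus\cdots\oplus v(x_n)\in\mathbb F_2^{mn}$. Then $I(x,y)\equiv\langle\phi(x),\phi(y)\rangle \pmod 2$. Hence $W=\mathrm{span}\,\phi(\mathcal F)$ is totally isotropic, because every member has even size and every pair has even intersection. Moreover $\mathcal F$ injects into $W\cap S^n$, where $S=\{v(0),\dots,v(m)\}$ is the chain.

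So the task is to bound $|W\cap S^n|$ over totally isotropic $W$. I would do this by induction on $n$, partitioning $\mathcal F$ according to the last coordinate (or the last two coordinates, for odd $m$). For a fixed last value $c$, the fiber $\mathcal F_c$ projects to a family on $n-1$ coordinates. Within a fiber the last coordinate contributes $c$ to every pairwise intersection, so the pairwise condition on the first $n-1$ coordinates is unchanged; what shifts is only the self-intersection parity, by $c$. Comparing two fibers $c<c'$, the contribution $\min(c,c')=c$ forces the prefix parts of $\mathcal F_c$ and $\mathcal F_{c'}$ to be "orthogonal up to a shift". I would handle this shift by passing to differences: fix $y_0\in\mathcal F_{c'}$ and consider the translates $\phi(y)-\phi(y_0)$. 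These lie in the isotropic space $W$, and their last block is supported on positions in $(c,c']$. This should let me show that the union over fibers behaves like the product of a one-coordinate factor and an $(n-1)$-coordinate family.

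\textbf{Main obstacle.} The difficult step is the local analysis of how many distinct last-coordinate values can coexist with large fibers. For a single coordinate, the isotropic points of $S$ are exactly the $v(a)$ with $a$ even, which gives $q+1$. With two coordinates and $m$ odd, however, odd values can appear in pairs, for example $(m,m)$ or $(1,1)$, which is exactly what produces the extra "$+1$".

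I would attack this by a dimension count. Let $U\subseteq\mathbb F_2^{m}$ be the projection of $W$ to the last block, and let $K$ be the kernel of that projection restricted to $W$. Then $|W\cap S^n|\le\sum_{u\in U\cap S}|(u+K)\cap S^{n-1}\times\{u\}|$, and each term is bounded by the induction hypothesis applied to the isotropic space $K$ (possibly after a translate). The remaining work is an inequality showing that $|U\cap S|$ and $\dim K$ trade off: a block admitting odd-valued chain vectors must "pair" with another block, costing at least as much as it gains, so that the best ratio per coordinate is $q+1$ for even $m$ and $\sqrt{(q+1)^2+1}$ for odd $m$. I expect this trade-off inequality to be the crux. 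For even $m$ I would try to show directly that $(q+1)^2\ge$ any pair-family size, i.e.\ that $(m,m)$-type odd pairs cannot add anything when $m$ is even. That holds because $v(m)$ with $m$ even already lies in the even class, so odd pairs must displace even ones.
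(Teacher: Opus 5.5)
There is a genuine gap. Your constructions are correct and settle the lower bound, but the upper bound, which is the real content of the theorem, is not proved. The staircase encoding is only a reformulation. For totally isotropic $W$, the set $W\cap S^n$ is itself an Eventown family, so bounding $|W\cap S^n|$ is the original problem, and the encoding alone gives nothing beyond the trivial $2^{\lfloor mn/2\rfloor}$.

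The fiber induction also fails as described. Your claim that within a fiber ``the pairwise condition on the first $n-1$ coordinates is unchanged'' is wrong. For $x,y\in\mathcal F_c$, writing $x',y'$ for their first $n-1$ coordinates, one has $I(x,y)=I(x',y')+c$. So for odd $c$ the prefixes form a family in which every pairwise and self-intersection is \emph{odd}. Such a fiber is a coset of $K$, not a subspace, and translating does not preserve the chain set $S^{n-1}$, so your induction hypothesis does not apply to it.

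Moreover, even granting a bound $E_m(n-1)$ per fiber, the number of occurring last values is not small. In the extremal block $\{(0,0)\}\cup\{1,3,\dots,m\}^2$ the first coordinate takes $r+1$ values, where $r=q+1$. Since $E_m(n)/E_m(n-1)\in\{r,\,r+1/r\}$, we get $(r+1)E_m(n-1)>E_m(n)$ for $m\ge 3$. So the fibers must be played off against each other, and the ``trade-off inequality'' you call the crux is exactly the missing argument. Your remark that for even $m$ odd pairs ``must displace'' even ones is a heuristic, not a proof.

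The idea you need is to group by the \emph{other} coordinates rather than by the value of the chosen one. For a prefix $u$, let $E(u)$ and $O(u)$ be the even and odd values $s$ with $(s,u)\in\mathcal F$. For $s\in E(u)$ and $t\in O(v)$, the Eventown condition forces $\min(s,t)\equiv I(u,v)\pmod 2$, a fixed parity. Hence $E(u)$ lies entirely below or entirely above $O(v)$. Counting even and odd values then gives $a+b\le q+1+\delta$ for $m=2q+\delta$, where $a,b$ are the largest sizes of $E(u),O(u)$.

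Consequently $|\mathcal F|\le a|T_e|+b|T_o|$, where $T_e,T_o$ are the prefixes admitting an even (resp.\ odd) value. Both $\{0\}\times T_e$ and $\{1\}\times T_o$ are again Eventown. This is how the all-odd fibers you ran into are absorbed: the coordinate is frozen at $1$, not deleted. For even $m$ this gives $|\mathcal F|\le r\max(|T_e|,|T_o|)$, and iterating over coordinates yields $r^n$.

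For odd $m$, the forced ordering when $a+b=r+1$ lets one replace $\mathcal F$, without shrinking it, by a family whose coordinates each use only $\{0,2,\dots,2q,m\}$ or $\{0,1,3,\dots,m\}$. On these sets $\min(s,t)\equiv st\pmod 2$, so parity vectors span a self-orthogonal binary code of dimension $d\le\lfloor n/2\rfloor$. Counting multiplicity assignments over two disjoint information sets of this code, combined with AM--GM, gives $|\mathcal F|\le r^{n-2d}(1+r^2)^d\le r^{n-2\lfloor n/2\rfloor}(1+r^2)^{\lfloor n/2\rfloor}$.
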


In particular, $E_1(n)=2^{\lfloor n/2\rfloor}$, recovering the
classical Eventown bound.

 We also characterize all families attaining the above bound. For even
$m=2q$, we prove that the family $\{0,2,\ldots,2q\}^n$ is the unique extremal family. For odd $m=2q+1\ge 3$, we define
\[\mathcal A=\{(0,0)\}\cup\{1,3,\ldots,m\}^2,
\qquad
\mathcal B=\{(m,m)\}\cup\{0,2,\ldots,2q\}^2,\]
and show that up to a permutation of coordinates the extremal families are precisely the families,
\[\mathcal A^t\times\mathcal B^{k-t}
\times\{0,2,\ldots,2q\}^{\,n-2k},\]
for some integer $0\le t\le k$, where $k=\lfloor n/2\rfloor$. Here, the final factor is omitted when $n$ is even.
The case, $m=1$ is exceptional. 
Identifying sets with their characteristic vectors, the extremal families are precisely the self-orthogonal subspaces of $\mathbb{F}_2^n$ of dimension $\lfloor n/2\rfloor$. Therefore, such extremal families need not admit the paired form described above.

\subsection{Outline of the Proof}
Given an Eventown family $\mathcal F$ of $m$-bounded multisets, we successively transform the multiplicities of one coordinate at a time, preserving the Eventown property at each step. For even $m$, the transformation decreases the size of the family by at most a factor of $r=q+1$ at each step. This gives us the desired upper bound. When $m$ is odd, we transform the family $\mathcal F$ into an Eventown family $\mathcal G$ without decreasing its size, such that each coordinate takes values from one of two prescribed sets. We then establish the upper bound using parity vectors, linear algebra, and counting arguments.
 
We next characterize the cases of equality. For even $m$, we prove that every extremal family consists precisely of all $m$-bounded multisets in which every element has even multiplicity. For odd $m\ge 3$, we show that $\mathcal G$ has a product structure determined by $\mathcal A$- and $\mathcal B$-pairs, with an additional coordinate restricted to even multiplicities when $n$ is odd. Finally, equality forces all intermediate families to have maximum size and a backward induction then shows that each transformation leaves the family unchanged, and hence $\mathcal F=\mathcal G$.

\section{Proof of Theorem 1}

We begin with an elementary linear-algebraic observation that will be used in the subsequent counting argument.
\begin{lemma}\label{lem:two-groups} 
Let $C\le \mathbb{F}_2^n$ be a $d$-dimensional self-orthogonal subspace, so that,
\[c \cdot c'=0 \ \ \ \ \ \ \ (c,c'\in C).\]
Then there exist disjoint coordinate sets $I,K\subseteq[n]$, each of cardinality $d$, such that the coordinate restriction maps to either $I$ or $K$ are injective on $C$. In particular, the coordinates indexed by either $I$ or $K$ uniquely determine every vector in $C$.
\end{lemma}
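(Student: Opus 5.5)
Let $G$ be a $d\times n$ generator matrix of $C$, with rows forming a basis. A coordinate set $J\subseteq[n]$ gives an injective restriction on $C$ exactly when the columns of $G$ indexed by $J$ span $\mathbb F_2^d$. Since $G$ has rank $d$, for $|J|=d$ this means the columns in $J$ form a basis of the column space. So we need two disjoint column bases $I$ and $K$ of $G$.

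First choose $I$ to be any $d$ linearly independent columns. Row-reduce so that $G=[\,\mathrm{Id}_d \mid A\,]$ after permuting coordinates, with $I$ as the first $d$ positions and $A$ a $d\times(n-d)$ matrix. Self-orthogonality of $C$ says $GG^{T}=0$ over $\mathbb F_2$, that is,
\[\mathrm{Id}_d + AA^{T}=0,\qquad\text{so}\qquad AA^{T}=\mathrm{Id}_d .\]
Hence $A$ has rank $d$. Its columns, which are the columns of $G$ outside $I$, therefore contain $d$ linearly independent vectors. Take $K$ to be these $d$ coordinates. Then $K$ is disjoint from $I$ and the columns of $G$ indexed by $K$ span $\mathbb F_2^d$. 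This also shows $n-d\ge d$, which is consistent with the known bound $d\le n/2$.

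Finally, translate back. If two vectors $c=uG$ and $c'=u'G$ in $C$ agree on $K$, then $(u-u')A_K=0$, where $A_K$ is the invertible $d\times d$ submatrix of $G$ on the columns in $K$. So $u=u'$ and $c=c'$, and restriction to $K$ is injective. Restriction to $I$ is injective because of the identity block.

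The only step needing an idea is the middle one: showing the columns outside $I$ still have full rank. It follows at once from $AA^{T}=\mathrm{Id}$, which is where self-orthogonality is used. The rest is routine bookkeeping with the generator matrix.
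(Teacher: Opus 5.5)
Your proof is correct and follows essentially the same route as the paper: choose $d$ independent columns $I$ of a generator matrix, use self-orthogonality to show that the columns outside $I$ still have rank $d$, and take $K$ among them. The only difference is presentation. You get the rank statement from $AA^{T}=\mathrm{Id}_d$ in systematic form, while the paper argues that a codeword supported on $I$ is orthogonal to all of $C$, whose restrictions to $I$ exhaust $\mathbb{F}_2^I$, and so must vanish; these are the same fact.
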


\begin{proof}
If $d=0$, take $I=K=\varnothing$. Assume $d>0$, and let $G$
be a $d\times n$ matrix whose rows form a basis of $C$. For $J\subseteq[n]$, let $G_J$ denote the submatrix of $G$
consisting of the columns indexed by $J$.
Choose $d$ independent columns, indexed by $I$.
Since $G_I$ is invertible, the restriction or projection map
\[\pi_I:C\longrightarrow \mathbb{F}_2^I\]
is a bijection. Here, $\mathbb{F}_2^I$ denotes the space of binary vectors indexed by $I$.
We claim that restriction to the complementary coordinates is
also injective. Indeed, suppose $z\in C$ is supported on $I$, that is, $z_i=0$ for all $i\notin I$. By the self-orthogonality of $C$, we have
\[
0=z\cdot c=\sum_{i\in I}z_i c_i,
\qquad\text{for every }c\in C.
\]
Since $c|_I$ ranges over all of $\mathbb{F}_2^I$, this forces
$z|_I=0$, and hence $z=0$.

Therefore restriction to $I^c=[n]\setminus I$ is injective, and thus
 $G_{I^c}$ has rank $d$.
Choosing $d$ independent columns of this matrix gives a set
$K\subseteq I^c$, of size $d$, such that restriction to $K$
is also injective.
\end{proof}

We next examine the multiplicities that can occur in a fixed coordinate of an Eventown family. We fix the values in all other coordinates and record which even and odd values can appear in the chosen coordinate. To state the next lemma, we introduce the following notation.

For a family $\mathcal F\subseteq\{0,1,\ldots,m\}^n$,
a coordinate $j\in[n]$, and a tuple
$u=(u_1,\ldots,u_{n-1})\in\{0,1,\ldots,m\}^{n-1}$, let
\[
\iota_j(s,u)
=
(u_1,\ldots,u_{j-1},s,u_j,\ldots,u_{n-1})
\]
denote the vector obtained by inserting $s$ into coordinate $j$.
Define
\[
\begin{aligned}
E_j(u)
&=\{s\in\{0,1,\ldots,m\}:
      \iota_j(s,u)\in\mathcal F,\ s\text{ is even}\},\\
O_j(u)
&=\{s\in\{0,1,\ldots,m\}:
      \iota_j(s,u)\in\mathcal F,\ s\text{ is odd}\}.
\end{aligned}
\]
Take
\[
a_j=\max_u |E_j(u)|,
\qquad
b_j=\max_u |O_j(u)|,
\]
where both maxima are taken over
$u\in\{0,1,\ldots,m\}^{n-1}$. When $j=1$, we write $(s,u)$ instead of $\iota_1(s,u)$ and omit the coordinate subscript, writing $E(u), O(u), a,$ and $b$ in place of $E_1(u), O_1(u), a_1,$ and $b_1$, respectively.

\begin{lemma}
\label{ordering lemma}
Let $m=2q+\delta$, where $\delta\in\{0,1\}$, and put $r=q+1$.
Let $\mathcal F\subseteq\{0,1,\ldots,m\}^n$ be an Eventown family.
Then, for every coordinate $j\in[n]$,
\[
a_j+b_j\le r+\delta.
\]
\end{lemma}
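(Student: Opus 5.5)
The plan is to show that, in a fixed coordinate, even and odd multiplicities cannot occur in the same fibre, and that the best even fibre and the best odd fibre must be separated by value. This reduces the bound to counting integers in two disjoint intervals. By symmetry it suffices to treat $j=1$, so we use the notation $E(u),O(u),a,b$.

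\textbf{Step 1: a fibre is entirely even or entirely odd.} Fix $u\in\{0,1,\ldots,m\}^{n-1}$ and write $|u|=\sum_i u_i$. For any $s,t$ with $(s,u),(t,u)\in\mathcal F$ we have
\[
I\bigl((s,u),(t,u)\bigr)=\min\{s,t\}+|u|.
\]
Taking $t=s$ gives $s\equiv |u|\pmod 2$ for every $s$ occurring in the fibre over $u$. Hence, for each $u$, at most one of $E(u)$ and $O(u)$ is nonempty.

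\textbf{Step 2: the trivial cases.} If $b=0$, then $a+b=a\le |\{0,2,\ldots,2q\}|=q+1=r$. If $a=0$, then $a+b=b\le$ the number of odd values in $\{0,\ldots,m\}$, which is $q+\delta\le r+\delta$. So assume $a,b\ge1$.

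\textbf{Step 3: the cross condition.} Choose $u$ with $|E(u)|=a$ and $v$ with $|O(v)|=b$; by Step 1 we have $u\neq v$. Put $E=E(u)$, $O=O(v)$ and $c=I(u,v)$. For $s\in E$ and $t\in O$,
\[
I\bigl((s,u),(t,v)\bigr)=\min\{s,t\}+c,
\]
and this must be even. Therefore $\min\{s,t\}$ has the same parity, namely that of $c$, for all pairs $(s,t)\in E\times O$. Since $s$ is even and $t$ is odd, the minimum is even exactly when $s<t$. So either every even value in $E$ is less than every odd value in $O$, or every odd value in $O$ is less than every even value in $E$.

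\textbf{Step 4: counting in the first case.} Suppose $\max E<\min O=o$, where $o$ is odd. The set $E$ consists of even numbers in $[0,o-1]$, so $|E|\le (o+1)/2$. The set $O$ consists of odd numbers in $[o,m]$, so $|O|\le \lfloor (m-o)/2\rfloor+1$.
\begin{itemize}
\item If $m=2q$, then $\lfloor (m-o)/2\rfloor=(2q-o-1)/2$, and $a+b\le q+1=r$.
\item If $m=2q+1$, then $m-o$ is even, and $a+b\le (m+1)/2+1=q+2=r+1$.
\end{itemize}

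\textbf{Step 5: counting in the second case.} Suppose $\max O=o'<\min E$, where $o'$ is odd. The set $O$ consists of odd numbers in $[1,o']$, so $|O|\le (o'+1)/2$. The set $E$ consists of even numbers in $[o'+1,m]$, so $|E|\le \lfloor (m-o'-1)/2\rfloor+1$. The same computation gives $a+b\le r+\delta$.

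\textbf{Main obstacle.} The essential idea is Step 1 combined with the observation in Step 3. The two maxima $a$ and $b$ may be attained on different fibres, but the intersection with a fixed pair of fibres $u,v$ is always $\min\{s,t\}$ plus a constant. This forces the chosen even and odd values to occupy disjoint intervals of $\{0,\ldots,m\}$, and after that only routine parity bookkeeping at the interval endpoints remains.
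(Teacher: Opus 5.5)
Your proposal is correct and follows essentially the same route as the paper: dispose of the cases $a_j=0$ or $b_j=0$, then use the cross condition $\min\{s,t\}+I(u,v)\equiv 0 \pmod 2$ to force the even and odd values into separated intervals, and finish by counting. The differences are cosmetic. Your Step 1 is not needed, since the formula $\min\{s,t\}+I(u,v)$ holds even when $u=v$. The paper also parametrizes the first case by $\max E_j(u)=2l$ rather than by $\min O$.
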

\begin{proof}
    If $a_j=0$ or $b_j=0$, the conclusion follows from the fact that $\{0,1,...,m\}$ contains $r$ even values and $q+\delta \le r$ odd values.

    Otherwise, choose $u,v$ such that $|E_j(u)|=a_j$ and $|O_j(v)|=b_j$. Let $E_j(u)=\{s_1<s_2<...<s_{a_j}\}$ and $O_j(v)=\{t_1<t_2<...<t_{b_j}\}$. Then, we have 
    \[\min(s_i,t_{i'})+I(u,v) \equiv 0 \pmod2\ \ \ \ \forall \ i\in[a_j],\ i'\in[b_j].\]
    Thus all $\min(s_i,t_{i'})$ have same parity. Since $s_i$'s are even and $t_{i'}$'s are odd, either $s_1<...<s_{a_j}<t_1<...<t_{b_j}$ or $t_1<...<t_{b_j}<s_1<...<s_{a_j}$. So, either every element of $E_j(u)$ lies below every element of $O_j(v)$, or conversely.

    In the first case, write $\max E_j(u)=2l$. There are at most $l+1$ even values at most $2l$, and $q+\delta -l$ odd values greater than $2l$. Hence
    \[a_j \le l+1,\ \ \ \ \ \ b_j \le q+\delta-l,\]
    so $a_j+b_j\le r+\delta.$

    In the second case, write $\max O_j(v)=2l-1$. Then,
    \[b_j \le l,\ \ \ \ \ \ \ \ \ a_j \le q-l+1,\]
    and therefore $a_j+b_j \le r\le r+\delta$. This proves the lemma.
\end{proof}
We next study Eventown families whose multiplicities in each
coordinate lie in one of two sets defined below. The following
lemma gives a bound for such families, which we will use to bound
the size of the original Eventown family $\mathcal F$. It also
describes all families attaining this bound when $m\ge3$.

Let $m=2q+1$. Define
\[
V_e=\{0,2,\ldots,2q\},
\qquad
V_o=\{1,3,\ldots,m\}.
\]
Set
\[
Q_e=V_e\cup\{m\},
\qquad
Q_o=\{0\}\cup V_o,
\]
and define the two families of pairs
\[
\mathcal A=V_o^2\cup\{(0,0)\},
\qquad
\mathcal B=V_e^2\cup\{(m,m)\}.
\]

\begin{lemma}
\label{lem:counting-G}
Let $\mathcal G\subseteq\{0,1,\ldots,m\}^n$ be an Eventown family of multisets
such that, in each coordinate, the multiplicities lie in either $Q_e$ or $Q_o$, with the choice fixed for that coordinate. Then
\[
|\mathcal G|
\le r^{n-2k}(1+r^2)^k.
\]
Moreover, if $r>1$, equivalently $m\ge3$, equality holds if and
only if, up to a permutation of the coordinates,
\[
\mathcal G
=
\mathcal A^{\,t}\times
\mathcal B^{\,k-t}\times
V_e^{\,n-2k}
\qquad\text{for some }t\in\{0,\ldots,k\}.
\]
\end{lemma}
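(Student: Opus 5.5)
The plan is to reduce everything to a parity computation in $\mathbb F_2^n$ and then apply \Cref{lem:two-groups} together with a Cauchy--Schwarz estimate.

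\textbf{Parity vectors.} Let $E\subseteq[n]$ be the coordinates whose values lie in $Q_e$, and $O$ the coordinates whose values lie in $Q_o$. I first compute the parity of $\min\{x_i,y_i\}$ in each coordinate.
\begin{itemize}
\item For $i\in E$, the minimum is odd exactly when $x_i=y_i=m$.
\item For $i\in O$, the minimum is odd exactly when $x_i\neq0\neq y_i$.
\end{itemize}
So I define $p(x)\in\mathbb F_2^n$ by $p(x)_i=[x_i=m]$ for $i\in E$ and $p(x)_i=[x_i\ne0]$ for $i\in O$. Then $I(x,y)\equiv p(x)\cdot p(y)\pmod 2$. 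The Eventown property (including $x=y$) says that $P=p(\mathcal G)$ is pairwise orthogonal. Hence $C=\langle P\rangle$ is self-orthogonal, of some dimension $d\le k$.

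\textbf{Fibres and the upper bound.} For a fixed $c$, the fibre $p^{-1}(c)$ has at most $w(c)=\prod_i f_i(c_i)$ elements, where:
\begin{itemize}
\item for $i\in E$: $f_i(0)=r$ (the even values) and $f_i(1)=1$ (the value $m$);
\item for $i\in O$: $f_i(0)=1$ and $f_i(1)=r$.
\end{itemize}
Thus $|\mathcal G|\le\sum_{c\in C}w(c)$.

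Apply \Cref{lem:two-groups} to $C$ to get disjoint $I,K$ of size $d$ on which restriction is injective, hence bijective onto $\mathbb F_2^d$. Since every $f_i\le r$, the coordinates outside $I\cup K$ contribute at most $r^{n-2d}$. The remaining sum has the form $\sum_{c}g(c_I)h(c_K)$, where both $c_I$ and $c_K$ run bijectively over $\mathbb F_2^d$. By Cauchy--Schwarz it is at most
\[
\Bigl(\sum_y g(y)^2\Bigr)^{1/2}\Bigl(\sum_y h(y)^2\Bigr)^{1/2}=(1+r^2)^d .
\]
Hence $|\mathcal G|\le r^{n-2d}(1+r^2)^d$. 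This is nondecreasing in $d$, and strictly increasing when $r>1$ because $1+r^2>r^2$. So $|\mathcal G|\le r^{n-2k}(1+r^2)^k$.

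\textbf{Equality, first consequences.} For $r>1$, equality forces each of the following.
\begin{enumerate}
\item $d=k$.
\item $P=C$, and every fibre is full.
\item $f_i(c_i)=r$ for every $c\in C$ and every $i\notin I\cup K$. Since $0\in C$, such a leftover coordinate (only possible when $n$ is odd) must lie in $E$ with $c_i\equiv0$. This gives the factor $V_e$.
\item Equality in Cauchy--Schwarz, i.e.\ $g(c_I)=\lambda h(c_K)$. Since $\sum g^2=\sum h^2$, we get $\lambda=1$.
\end{enumerate}
Taking $\log_r$, condition 4 becomes an integer identity
\[
\sum_{i\in I}\ell_i(c_i)=\sum_{j\in K}\ell_j(c_j)\qquad(c\in C),
\]
with $\ell_i(0)=1,\ell_i(1)=0$ on $E$ and $\ell_i(0)=0,\ell_i(1)=1$ on $O$.

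\textbf{Main obstacle: extracting the pairing.} The hard step is to show that, after relabelling, $C$ is spanned by vectors $e_i+e_{\sigma(i)}$ for a perfect matching $\sigma$ between $I$ and $K$, with $i$ and $\sigma(i)$ of the same type. Once this holds, full fibres give $\mathcal A$ on $O$-pairs and $\mathcal B$ on $E$-pairs.

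My first attempt is as follows. Evaluate the identity at $c=0$ to match the numbers of $E$-coordinates in $I$ and in $K$. Then evaluate it at the $k$ vectors $c^{(i)}\in C$ with $c^{(i)}_I=e_i$. The left side changes by exactly one unit, so each $c^{(i)}_K$ must change the $K$-sum by the same single unit. I would then exploit the freedom in choosing $I$ (any basis of columns works, and the equality conditions must hold for every valid choice) to perform exchanges. This should force each $c^{(i)}$ to have weight exactly $2$, supported on $i$ and one $K$-coordinate of the same type. Self-orthogonality together with injectivity then makes these $K$-coordinates distinct.

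The converse direction, that the product families attain the bound and are Eventown, is a direct check using the parity rule above.
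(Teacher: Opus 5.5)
Your upper bound is correct and is essentially the paper's argument. Your $p$ coincides with the parity vector $x\mapsto(x_i \bmod 2)_i$ on $Q_e$ and $Q_o$, and Cauchy--Schwarz replaces the paper's AM--GM step. Your equality conditions 1--4 are also correctly derived.

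The gap is the step you flag as the main obstacle: the pairing is never proved. Evaluating the identity only at $c=0$ and at the vectors $c^{(i)}$ cannot force $c^{(i)}_K$ to have weight $1$. The change of the $K$-sum is $\#(O\cap S)-\#(E\cap S)$ with $S=\operatorname{supp}(c^{(i)}_K)$, and this equals $\pm1$ for many larger odd-size supports. Here is a concrete example:
\begin{itemize}
\item Take $k=4$, $I=\{1,2,3,4\}$, $K=\{5,6,7,8\}$, and $C=\{(u,Tu):u\in\mathbb F_2^4\}$, where $T$ has zeros on the diagonal and ones elsewhere.
\item Over $\mathbb F_2$, $T^{\top}T$ is the identity. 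So $C$ is self-orthogonal of dimension $4$, and both restrictions are bijective.
\item Let coordinates $3,4,5,6$ have type $O$ and coordinates $1,2,7,8$ have type $E$.
\item Your identity then holds at $0$ and at every $c^{(i)}$, yet each $c^{(i)}$ has weight $4$.
\item The identity fails at the vector supported on $\{1,2,5,6\}$, where the two sides are $0$ and $4$.
\end{itemize}
The proposed ``exchange'' argument over other choices of $I$ is only announced, not carried out. So the characterization remains unproved.

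The missing idea is to use the identity on all of $C$ together with the linearity of $C$. Let $\alpha\in\mathbb F_2^I$ and $\beta\in\mathbb F_2^K$ be the indicator vectors of the $O$-coordinates. Your identity then reads $\operatorname{wt}(c_I+\alpha)=\operatorname{wt}(c_K+\beta)$ for all $c\in C$.
\begin{itemize}
\item Since restriction to $I$ is onto, some $c^*\in C$ has $c^*_I=\alpha$, and the identity gives $c^*_K=\beta$.
\item Replacing $c$ by $c+c^*\in C$ yields $\operatorname{wt}(c_I)=\operatorname{wt}(c_K)$ for all $c\in C$.
\item So the linear bijection $c_I\mapsto c_K$ preserves Hamming weight, and is therefore a coordinate permutation $\sigma$.
\item Hence $c^{(i)}_K=e_{\sigma(i)}$.
\item Also $c^*_K=\beta$ means $\beta_{\sigma(i)}=\alpha_i$, so paired coordinates have the same type.
\end{itemize}
This is exactly the paper's argument, namely $T\alpha=\beta$ and then $\operatorname{wt}(Tv)=\operatorname{wt}(v)$. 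Once it is in place, your full-fibre remark produces the factors $\mathcal A$ and $\mathcal B$, and the proof is complete.
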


\begin{proof}
Observe that the identity $\min(s,t)\bmod2=(s\bmod2)(t\bmod2)\label{1}$ holds within each of $Q_e,Q_o$. For $x \in \mathcal{G}$, define its parity vector as 
\[\pi(x)=(x_1 \bmod2,x_2 \bmod2,...,x_n \bmod2)\in\mathbb{F}_2^n.\]
Using the above identity we get for $x,y\in \mathcal{G}$,
\[\pi(x) \cdot \pi(y)=\sum_{i=1}^n\min(x_i,y_i)=0\ \ \ \ \ \text{in}\ \mathbb{F}_2.\]
Let $C=\text{span}_{\mathbb{F}_2}\{\pi(x):x \in \mathcal{G}\}$. Then $C$ is a self-orthogonal subspace of $\mathbb{F}_2^n$. Therefore if dimension of $C$ is $d$, then $d\le k$.

By Lemma~\ref{lem:two-groups}, there exist disjoint coordinate sets $I,J\subseteq[n]$, each of size $d$, such that either restriction determines a vector of $C$. Put $L=[n]\setminus(I\cup J)$, so $|L|=n-2d$. Note that these restrictions determine the parity vector, not the actual multiplicities.

In each coordinate, the allowed multiplicities have parity counts $(r,1)$ or $(1,r)$. Thus, one parity has $r$ multiplicities, and the other has one. Let $P$ be the set of parity vectors occurring in $\mathcal{G}$. For each $c \in P$, define
\begin{align*}
    a_c &\coloneqq\  \text{the number of allowed multiplicity assignments on} \ I \ \text{having parity pattern }\ c|_I, \\  b_c &\coloneqq\  \text{the number of allowed multiplicity assignments on} \ J \ \text{having parity pattern }\ c|_J.
\end{align*}
Once $c$ is fixed, each coordinate in $L$ has at most $r$ multiplicity choices. Hence the number of members of $\mathcal{G}$ with parity vector $c$ is at most $a_cb_cr^{n-2d}$. Consequently,
\[|\mathcal{G}|\le r^{n-2d}\sum_{c \in P} a_cb_c.\]

Next, we shall bound $\sum_{c \in P} a_c^2$. For every binary pattern $u$ on $I$, let $A(u)$ be the number of allowed multiplicity assignments with that pattern. Since $I$ contains $d$ coordinates, there are $2^d$ possible patterns for $u$. If $u$ selects the parity with $r$ choices in exactly $p$ coordinates, then
\[A(u)=r^{p}.\]
Since $l'$ coordinates are chosen from $d$ coordinates, there are exactly $\binom{d}{p}$ such patterns. Consequently,
\[
\sum_{u\in\{0,1\}^{I}}A(u)^2
=\sum_{p=0}^{d}\binom {d}{p} r^{2p}
=(1+r^2)^d.
\]
Since $I$ determines the complete parity vector, distinct $c \in P$ provide distinct restrictions $c_I$. Therefore, we have,
\[\sum_{c \in P} a_c^2 \le(1+r^2)^d.\]
 Similarly from  $J$ we have 
 \[\sum_{c \in P} b_c^2 \le(1+r^2)^d.\]
Hence
\begin{align*}
    |\mathcal{G}|&\le r^{n-2d}\sum_{c \in P}a_cb_c \\ &\le r^{n-2d} \frac{1}{2}(\sum_{c \in P} a_c^2+\sum_{c \in P} b_c^2) \\ &\le r^{n-2d} (1+r^2)^d \\ & \le r^{n-2k} (1+r^2)^k\ \ \ \ \ \ \ \ \ \ \ [\text{as}\ r\ge1]
\end{align*}
Now we characterize the extremal families. Since $|\mathcal{G}|=r^{n-2k}(1+r^2)^k$, every inequality above is an equality. Consequently, we have the following
\begin{enumerate}
    \item $d=k$,
    \item $P=C$, and 
    \item $a_c=b_c$ for all $c \in C$.
\end{enumerate}
Also, if a parity pattern occurs in $P$, then every allowed choice of multiplicity vector realizing that pattern belongs to $\mathcal{G}$. 

If $n$ is odd, there is one coordinate outside $I \cup J$. This coordinate must contribute exactly $r$ choices for each parity pattern that occurs. Since $0\in C=P$, the even parity must provide $r$ choices. Hence, the multiplicity assigned to this coordinate must belong to $Q_e$. In contrast, the odd parity provides only one choice, so no member of the family can have odd multiplicity in this coordinate. Consequently, the remaining coordinate takes values precisely from $V_e$, independently of the other coordinates.

Now since $I,J$ both determines $C$, their restrictions have the form
\[\{(u,Tu):u \in \mathbb{F}_2^k\},\]
where $T=\pi_J \circ \pi_I^{-1}$ is an invertible linear map. Define $\alpha \in \mathbb{F}_2^{|I|} $ as follows
\[\alpha_i=\begin{cases}
0,
& \text{the $i$-th coordinate of $I$ uses $Q_e$},\\[2mm]
1,
& \text{the $i$-th coordinate of $I$ uses $Q_o$}.
\end{cases}\]
Similarly, define $\beta\in \mathbb{F}_2^{|J|}$ on $J$. Thus, a coordinate of $I$ gives $r$ choices precisely when $u_i=\alpha_i$. Therefore,
\[A(u)=r^{k-\text{wt}(u+\alpha)}.\]
Similarly,
\[B(Tu)=r^{k-\text{wt}(Tu+\beta)},\]
where $B(Tu)$ is the analogous multiplicity assignment count on $J$. By using $a_c=b_c$ for $c\in C$ and $r>1$, we get,
 \[\text{wt}(u+\alpha)=\text{wt}(Tu+\beta)\ \ \ \ \ \ \text{for every}\  u.\]
Taking $u=\alpha$ we have $T\alpha=\beta$. Substituting $u=v+\alpha$ into the preceding equation, we obtain
\[
\operatorname{wt}(Tv)=\operatorname{wt}(v)
\qquad\text{for every }v.
\]
Each standard basis vector has Hamming weight $1$. Since $T$ preserves Hamming weight, it maps every standard basis vector to a standard basis vector. The invertibility of $T$ ensures that these images are distinct. Hence, $T$ is a coordinate permutation, inducing a pairing between the coordinates of $I$ and those of $J$. Within each pair, the two coordinates have the same parity, and both take values exclusively from either $Q_e$ or $Q_o$.

A pair of $Q_e$-coordinates, therefore, gives
\[V_e^2 \cup \{(m,m)\}=\mathcal{B}\]
and a pair of $Q_o$-coordinates gives
\[\{(0,0)\}\cup V_o^2=\mathcal{A}.\]
Since all parity patterns and all permitted assignments occur, we have,
\[\mathcal{G}=\mathcal{A}^t\times\mathcal{B}^{k-t}\times V_e^{n-2k},\]
after a coordinate permutation. The converse is immediate.
\end{proof}

Now, we can prove Theorem \ref{Theorem1}:
 
\begin{proof}[\textbf{Proof of Theorem} \ref{Theorem1}]
For a fixed coordinate $i$ and a family $\mathcal{F}$, define
\begin{align}
    T_e^i&=\{x:\iota_i(s,x)\in \mathcal{F},\text{for some even}\ s\in \{0,1,...,m\}\} \\ T_o^i&=\{y:\iota_i(t,y)\in \mathcal{F},\text{for some odd}\ t\in \{0,1,...,m\}\}
\end{align}
For $i=1$, we drop the superscript and just write $T_e,T_o$.

\textbf{Case I: m is even}

Suppose $\delta=0$, so $m=2q$. If $q=0$, the conclusion is immediate, otherwise, by Lemma \ref{ordering lemma}, we have,
\[
|\mathcal F|\le r\max(|T_e|,|T_o|).
\]
Now, we fix the first coordinate and replace $\mathcal F$ by
\[
\mathcal F^{1}=
\begin{cases}
\{(0,x):x\in T_e\},& if \ |T_e|\ge |T_o|,\\[1mm]
\{(1,y):y\in T_o\},& if\ |T_o|>|T_e|.
\end{cases}
\]
This replacement is clearly an Eventown family.
Moreover,
\[
|\mathcal F^1|=\max(|T_e|,|T_o|)\ge\frac{|\mathcal F|}{r}.
\]
The selected first coordinate is now fixed, and all other coordinates remain unchanged.

Starting with $\mathcal F$, we perform this operation successively on
all $n$ coordinates. 
The resulting family $\mathcal G$ contains at most one member, and at each iteration, the size of the family decreases by a factor of at most $r$.
Consequently,
\[
|\mathcal F|\le r^n |\mathcal{G}|\le r^n=(q+1)^n.
\]
This proves the theorem when $m$ is even.

\textbf{Case II: m is odd}

Suppose $\delta=1$, so $m=2q+1$. Define
\[Q_e=\{0,2,...,2q\}\cup\{m\},\ \ \ \ \ \ Q_o=\{0\}\cup\{1,3,...,2q+1\}\]
both of size $r+1$.

We first transform the family $\mathcal{F}$ into another Eventown family $\mathcal{G}$, without decreasing its size, such that the value in each coordinate is either from $Q_e$ or $Q_o$.

Fix the first coordinate. First suppose that,
\[|\mathcal{F}|\le r \max(|T_e|,|T_0|).\]
If $|T_e|\ge |T_o|$, replace $\mathcal{F}$ by $\{(s,x):s\in\{0,2,...,2q\},x\in T_e\}=V_e \times T_e$; otherwise replace it by $\{(t,y):t\in\{1,3,...,2q+1\},y\in T_o\}=V_o \times T_o$. The replacement family is also an Eventown family, where the chosen first coordinate of each member
lies in $Q_e$ or $Q_o$.
The  size of the new family is $r \max(T_e,T_o)\ge |\mathcal{F}|$.  

Now, suppose $|\mathcal{F}|>r \max(|T_e|,|T_o|)$. This forces $a+b >r$. Therefore, by Lemma \ref{ordering lemma}, we get,
\[a+b=r+1\]
and for every $x\in T_e$ with $a$ number of even extensions and any $y \in T_o$ with $b$ number of odd extensions, the even extensions come before the odd extensions. Therefore, selected tuples have exactly the following extensions:
\begin{align*}
    x&:0,2,...,2l\\ y&:2l+1,2l+3,...,2q+1
\end{align*}
for some $l \in \{0\}\cup[q]$.

Now, every extension of a tuple in $T_e$ precedes every extension of $y$. Consequently, each tuple in $T_e$ has  extensions
at most $2l$.
Similarly, a tuple in $T_o$ has all its extensions above  $2l$.  
Consequently, every even extension precedes every odd extension in  $\mathcal{F}$.

Now, if $|T_e|\ge |T_o|$ we replace $\mathcal{F}$ by \[\mathcal{F}_1^{(1)}=\{(s,x):s\in\{0,2,...,2q\},x\in T_e\} \cup \{(2q+1,y):y \in T_o\},\] otherwise by \[\mathcal{F}_2^{(1)}=\{(0,x):x \in T_e\} \cup \{(t,y):t\in\{1,3,...,2q+1\},y\in T_o\}.\] Both of the families, $\mathcal{F}_1^{(1)}$ and $\mathcal{F}_2^{(1)}$ are Eventown families with the sizes $r|T_e|+|T_o|$ and $|T_e|+r|T_o|$, respectively.

So, when $|T_e|\ge |T_o|$ we have
\[r|T_e|+|T_o|-(a|T_e|+b|T_o|)=(b-1)(|T_e|-|T_o|)\ge0,\]
and for  $|T_o|\ge|T_e|$, we have
\[|T_e|+r|T_o|-(aT_e+b|T_o|)=(a-1)(|T_o|-|T_e|)\ge0.\]
Thus the chosen replacement family, has size at least $a|T_e|+b|T_o|\ge|\mathcal F|$,
preserves the Eventown condition and restricts the first coordinate
to $Q_e$ or $Q_o$.

We apply the above procedure successively to all coordinates, with each iteration modifying only the chosen coordinate of the family members. After all $n$ iterations, we obtain an Eventown family $\mathcal G$ satisfying
\[
|\mathcal F|\le |\mathcal G|,
\]

such that, in each coordinate, every member of $\mathcal G$ takes values exclusively from either $Q_e$ or $Q_o$.
Hence by Lemma \ref{lem:counting-G}, we have,
\[|\mathcal{F}|\le |\mathcal G|
\le r^{n-2k}(1+r^2)^k.\]
Therefore,
\[|\mathcal{F}|\le\begin{cases}
\bigl((q+1)^2+1\bigr)^{n/2},  \qquad\ \ \ \ \ \ \ \ \ \ \ \ \ \text{ for } n\text{ even},\\[2mm]
(q+1)\bigl((q+1)^2+1\bigr)^{(n-1)/2},\qquad \text{ when }n\text{ is odd}.
\end{cases}\]
\end{proof}
\section{Characterization of Extremal Families for \texorpdfstring{$m\ge 2$}{m >= 2}}
Let $\mathcal F^0=\mathcal F$, and let $\mathcal F^i$ denote the family after $i$ iterations of the reduction procedure. At the $i$-th iteration, define
\[
\begin{aligned}
T_e^i
&=\{u:\iota_i(s,u)\in\mathcal F^{i-1}
\text{ for some }s\in V_e\},\\
T_o^i
&=\{u:\iota_i(s,u)\in\mathcal F^{i-1}
\text{ for some }s\in V_o\},\\
a_i
&=\max_u\left|\{s\in V_0:
\iota_i(s,u)\in\mathcal F^{i-1}\}\right|,\\
b_i
&=\max_u\left|\{s\in V_1:
\iota_i(s,u)\in\mathcal F^{i-1}\}\right|,
\end{aligned}
\]
where $u$ ranges over $\{0,1,\ldots,m\}^{n-1}$. Thus, $a_i$ and $b_i$ are the maximum numbers of even and odd extensions, respectively, in coordinate $i$.
\subsection{Case of Even \texorpdfstring{$m$}{m}}
Let $m=2q$. The case $q=0$ is immediate. Suppose $|\mathcal{F}|=r^n$. Now we recall Case I in the proof of Theorem \ref{Theorem1}.
Writing $\mathcal{F}^{i}$ for the family after $i$ iterations, equality forces $|\mathcal{F}^{i}|=r^{n-i}$. Thus, at each iteration the size of the family decreases by a factor of $r$. Since $a_i+b_i\le r$ and $b_i \le r-1$, the condition $|T_o^i|>|T_e^i|$ gives us
\[|\mathcal{F}^{i-1}| \le a_i|T_e^i|+b_i|T_o^i| <r|T_o^i|\]
which contradicts the equality, $|\mathcal F^{i-1}|=r|T_o^i| $.
Thus, $|T_o^i|$ can not be greater than $|T_e^i|$.
So, every chosen constant is $0$ and therefore $\mathcal{F}^{n}=\{(0,0,...,0)\}$.

Now, we recover the original family $\mathcal{F}$. Suppose $|T_e^i|=|T_o^i|$ at the step $i$. Consider an Eventown family 
\[
\mathcal K=\{\iota_i(1,u):u\in T_o^i\}.\] Clearly  $|\mathcal K|=r^{n-i}.$ 
We keep its first $i$ coordinates fixed and apply the same procedure to the remaining coordinates. At each remaining iteration, the size of the family decreases exactly by a factor $r$, and therefore chooses $0$. The member of the final singleton family consequently has exactly one coordinate equal to $1$, contradicting the Eventown condition and thus $|T_e^i|\ne |T_o^i|$. 
Hence $|T_e^i|>|T_o^i|$ at every step $i$.
So we have
\[
r|T_e^i|=|\mathcal F^{i-1}|
\le a_i|T_e^i|+b_i|T_o^i|
\le r|T_e^i|-b_i(|T_e^i|-|T_o^i|),
\] which implies $b_i=0$. 
Since there are $|T_e^i|$ tuples and $|\mathcal{F}^{i-1}|=r|T_e^i|$ member in, each tuple must have all $r$ even extension. Thus each iteration fixes a coordinate that could take any value in $V_e$. Reversing the iteration process, from $\mathcal F^n=\{0\}^n$ we obtain
\[
\mathcal F=V_e^n.
\] Therefore $V_e^n$ is the unique extremal Eventown family when $m$ is even.

\subsection{Case of Odd m}

Let $M=r^{n-2k}(r^2+1)^k.$ Let us consider the following sequence of families obtained from the iteration process described in Case II in the proof of the Theorem \ref{Theorem1}
\[\mathcal F=\mathcal F^0,\mathcal F^1,\ldots,\mathcal F^n=\mathcal G.\]
Since $|\mathcal F|=M$,  we have
\[|\mathcal F^i|=M
\qquad (0\le i\le n).\]
Now, we proceed by backward induction to show that every intermediate family coincides with $\mathcal G$. Let us fix $i$ and consider $\mathcal F^i=\mathcal G$.

Recalling the fact established in Case II in the proof of the Theorem \ref{Theorem1} and Lemma \ref{ordering lemma} we have
\[M\le a_i|T_e^i|+b_i|T_o^i|,
\qquad
a_i+b_i\le r+1.\]
Moreover, if $a_i+b_i=r+1$, then every even extension precedes every odd extension in coordinate $i$.
Also recall that, by the characterization of $\mathcal G$, its coordinates can be grouped into $\lfloor n/2\rfloor$ pairs, with both coordinates in each pair using the same allowed set, either $Q_e$ or $Q_o$. When $n$ is odd, the remaining unpaired coordinate takes every value in $V_0$, independently of the other coordinates.

\medskip
\noindent
\textbf{Subcase I. Coordinate $i$ belongs to a pair in $\mathcal G$.}

Both even and odd multiplicities occur in coordinate $i$, and the iteration producing $\mathcal F^i$ retains both corresponding groups, $T_e^i$ and $T_o^i$.

Now, if the coordinate $i$ belongs to a $\mathcal B$-pair, then
\[\mathcal F^i
=
\{\iota_i(s,u):s\in V_0,\ u\in T_e^i\}
\cup
\{\iota_i(m,v):v\in T_o^i\}.\]
Let $L$ denote the number of multiplicity assignments on the other $n-2$
coordinates. Since
\[\mathcal B=V_0^2\cup\{(m,m)\},\]
we have
\[|T_e^i|=rL,
\qquad
|T_o^i|=L.\]
Consequently,
\[M
\le a_i|T_e^i|+b_i|T_o^i|
\le r|T_e^i|+|T_o^i|
=M.\]
Since $a_i+b_i=r+1$, the equality gives us
\[(b_i-1)\bigl(|T_e^i|-|T_o^i|\bigr)=0.\]
Again, since $r>1$ and $|T_e^i|=r|T_o^i|$, we have $b_i=1$ and $a_i=r$.

Thus some tuple in $T_e^i$ has all even extensions, including
$2q$. The ordering property forces every odd extension in
coordinate $i$ to be $m$. Hence
\[\mathcal F^{i-1}
\subseteq
\{\iota_i(s,u):s\in V_0,\ u\in T_e^i\}
\cup
\{\iota_i(m,v):v\in T_o^i\}
=
\mathcal F^i.\]
Since both families have size $M$, they are equal. So $F^{i-1} = \mathcal{G}$.

Now we consider when the coordinate $i$ belongs to an $\mathcal A$-pair. Here,
\[\mathcal F^i
=
\{\iota_i(0,u):u\in T_e^i\}
\cup
\{\iota_i(t,v):t\in V_1,\ v\in T_o^i\}.\] and 
\[|T_e^i|=L,
\qquad
|T_o^i|=rL.\]
Now, similarly, we have $a_i=1$ and $b_i=r$.

Some tuples in $T_o^i$ have all odd extensions, including $1$. This forces every even extension in coordinate
$i$ to be $0$. Consequently,
\[\mathcal F^{i-1}
\subseteq
\{\iota_i(0,u):u\in T_e^i\}
\cup
\{\iota_i(t,v):t\in V_1,\ v\in T_o^i\}
=
\mathcal F^i.\]
Equal cardinalities again imply equality and hence $\mathcal{F}^{i-1} = \mathcal{G}$.

\medskip
\noindent
\textbf{Subcase II. Coordinate $i$ is unpaired in $\mathcal G$.}

This possibility occurs only when $n$ is odd. In this case, the $i$-th coordinate of the members of $\mathcal F^i$ ranges over all possible values in $V_0$. So the replacement family 
\[\mathcal F^i
=
\{\iota_i(s,u):s\in V_0,\ u\in T_e^i\},
\qquad \text{ and }
M=r|T_e^i|.\]

The family $\{\iota_i(t,v):t\in V_1,\ v\in T_o^i\}
\cup\{\mathbf 0\}
$
is Eventown. 
This implies
\[r|T_o^i|+1\le M=r|T_e^i|,
 \text{ in particular, } |T_o^i|<|T_e^i|.\]

Suppose that $T_o^i\ne\varnothing$, then $b_i>0$. Now 
\[
r|T_e^i|
=M
\le a_i|T_e^i|+b_i|T_o^i|
<(a_i+b_i)|T_e^i|.
\]
Since $a_i+b_i\le r+1$, we have
$a_i+b_i=r+1$.

The family
\[\{\iota_i(s,u):s\in V_0,\ u\in T_e^i\}
\cup
\{\iota_i(m,v):v\in T_o^i\}\]
is also an Eventown family of size
\[r|T_e^i|+|T_o^i|
=M+|T_o^i|
>M,\]
which contradicts the upper bound. 
Therefore $T_o^i=\varnothing$, and hence
\[\mathcal F^{i-1}
\subseteq
\{\iota_i(s,u):s\in V_0,\ u\in T_e^i\}
=
\mathcal F^i,\]

which implies $\mathcal F^{i-1} =\mathcal F^i= \mathcal{G}$.

\medskip
So starting with $\mathcal F^n=\mathcal G$ we have
\[\mathcal F
=
\mathcal F^0
=
\mathcal F^1
=
\cdots
=
\mathcal F^n
=
\mathcal G.\]
Since $\mathcal G$ has already been characterized in Lemma $\ref{lem:counting-G}$ and $\mathcal F=\mathcal G$, the desired characterization follows. This characterizes the maximum Eventown family of multisets.

\begin{acknowledgement}
The work of the author Anirban Banerjee is supported by the Department of Biotechnology, India (No. BT/PR40182/BTIS/137/88/2025).
  The work of the author Abisek Dewan is supported by University Grants Commission, India (Beneficiary Code/Flag: BWBDA00147662 U). The authors have used ChatGPT to assist with polishing arguments, paraphrasing language, and making grammatical corrections during the preparation of this manuscript.

\end{acknowledgement}

 \bibliographystyle{siam}
 	\bibliography{evenbib}
    \nocite{*}

\end{document}